\newtheorem{theorem}{Theorem}[section]
\newtheorem{proposition}[theorem]{Proposition}
\newtheorem{lemma}[theorem]{Lemma}
\theoremstyle{definition}
\newtheorem{definition}[theorem]{Definition}
\theoremstyle{remark}
\newtheorem{remark}{Remark}[section]
\numberwithin{equation}{section}
\newcommand{\R}{\mathbb{R}}
\newcommand{\N}{\mathbb{N}}
\renewcommand{\d}{\,{\rm d}}
                    \setlist[enumerate, 1]{1\textsuperscript{o}}
\begin{document}

\title[Sharp spherical extension in arbitrary dimensions]{On sharp Fourier extension from spheres in arbitrary dimensions}

\author[Carneiro, Negro and Oliveira e Silva]{Emanuel Carneiro, Giuseppe Negro and Diogo Oliveira e Silva}
\subjclass[2010]{42B10}
\keywords{Sharp  restriction theory, sphere, maximizers, perturbation, spherical harmonics.} 

\address{
ICTP - The Abdus Salam International Centre for Theoretical Physics,
Strada Costiera, 11, I - 34151, Trieste, Italy.}
\email{carneiro@ictp.it}

\address{ 
Center for Mathematical Analysis, Geometry and Dynamical Systems \& Departamento de Matemática\\ 
Instituto Superior Técnico\\
Av. Rovisco Pais\\ 
1049-001 Lisboa, Portugal.} 
\email{giuseppe.negro@tecnico.ulisboa.pt}
\email{diogo.oliveira.e.silva@tecnico.ulisboa.pt}

\allowdisplaybreaks
\numberwithin{equation}{section}

\begin{abstract}
We prove a new family of sharp $L^2(\mathbb S^{d-1})\to L^4(\R^d)$ Fourier extension inequalities from the unit sphere $\mathbb S^{d-1}\subset \R^d$, valid in arbitrary dimensions $d\geq 3$.
\end{abstract}

\maketitle

\section{Introduction}
The purpose of this short note is to establish the following result.
\begin{theorem}\label{thm_main}
    Let   $d\ge 3$. There exists $a_\star=a_\star(d)>0$ such that 
    \begin{equation}\label{eq_main_conclusion}
        \frac{\displaystyle \int_{\mathbb R^d} \lvert \widehat{f\sigma}(x)\rvert^4\, \frac{\d x}{(2\pi)^d} + a_\star\left\lvert \int_{\mathbb S^{d-1}} f(\omega)\, \d\sigma\right\rvert^4}{\lVert f \rVert_{L^2(\mathbb S^{d-1})}^4 }
        \le 
        \frac{\displaystyle  \int_{\mathbb R^d} \lvert \widehat{\sigma}(x)\rvert^4\, \frac{\d x}{(2\pi)^d} + a_\star\left\lvert   \int_{\mathbb S^{d-1}} \mathbf 1(\omega)\, \d\sigma\right\rvert^4}{\lVert \mathbf 1 \rVert_{L^2(\mathbb S^{d-1})}^4}.
    \end{equation}
    Equality holds in \eqref{eq_main_conclusion} if and only if $f$ is a constant function.
\end{theorem}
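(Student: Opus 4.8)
The plan is to reduce the claimed inequality to a statement about the linearization of the extension functional at the constant function $\mathbf 1$, and to establish a spectral gap for the associated self-adjoint operator on $L^2(\mathbb S^{d-1})$. Concretely, write $f=\mathbf 1+g$ and normalize so that $g$ is orthogonal to constants (any component of $f$ along $\mathbf 1$ can be absorbed by scaling, so WLOG $\int_{\mathbb S^{d-1}} g\,\d\sigma=0$). Expanding the quartic functional
\[
\Phi_a(f):=\int_{\R^d}\lvert\widehat{f\sigma}(x)\rvert^4\,\frac{\d x}{(2\pi)^d}+a\Bigl\lvert\int_{\mathbb S^{d-1}}f\,\d\sigma\Bigr\rvert^4
\]
around $f=\mathbf 1$ gives $\Phi_a(\mathbf 1+g)=\Phi_a(\mathbf 1)+2\,\mathrm{Re}\,\langle Q_a g,g\rangle+(\text{cubic}+\text{quartic in }g)$, where the quadratic form involves the convolution kernel $\sigma\ast\sigma\ast\sigma\ast\sigma$ and the penalization only affects the $a\lvert\widehat{\mathbf 1\sigma}\rvert^2$-type terms; crucially, since $g\perp\mathbf 1$, the term $a\lvert\int g\rvert^4$ and the mixed terms of the form $a(\int g)(\cdots)$ drop out, so the operator $Q_a$ differs from $Q_0$ only through lower-order pieces that I will need to compute explicitly. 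The first step is to carry out this Taylor expansion carefully and identify $Q_a$.

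The second and central step is to diagonalize $Q_a$ using spherical harmonics. Because $\sigma$ is rotation-invariant, the quadratic form commutes with $O(d)$, so $Q_a$ acts as a scalar multiple $\lambda_\ell(a)$ on each spherical harmonic subspace $\mathcal H_\ell$. These multipliers are expressible via integrals of Bessel functions: $\widehat{\sigma}$ restricted to the relevant frequencies produces the classical formula $\widehat{Y_\ell\sigma}(x)=c_{d,\ell}\,\lvert x\rvert^{-(d-2)/2}J_{\ell+(d-2)/2}(\lvert x\rvert)\,Y_\ell(x/\lvert x\rvert)$, and $\lambda_\ell(0)$ becomes a ratio of integrals $\int_0^\infty J_\mu J_\nu J_\rho J_\tau\,r^{1-d}\,\d r$ (Weber–Schafheitlin type). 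I expect $\lambda_0(0)$ to be the largest, with $\lambda_1(0)$ the closest competitor; the role of the penalization term $a_\star\lvert\int f\rvert^4$ is precisely to \emph{decrease} $\lambda_0$ relative to its unpenalized value — wait, rather: since constants are already conjectured maximizers in low dimensions but may fail to be in higher $d$, the penalization tilts the balance so that $\lambda_0(a_\star)$ strictly dominates all $\lambda_\ell(a_\star)$ for $\ell\ge 1$. So I would choose $a_\star$ so that $\lambda_0(a_\star)-\lambda_\ell(a_\star)\ge\delta>0$ for all $\ell\ge1$, which requires showing $\lambda_\ell(a)\to$ some limit and that only finitely many $\ell$ need checking (monotonicity or decay of $\lambda_\ell(0)$ in $\ell$), and that $\lambda_0$ depends on $a$ in a controlled way while $\lambda_\ell$ for $\ell\ge1$ does not.

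The third step upgrades the infinitesimal (second-variation) inequality to a global one. Here I would argue that $\Phi_{a_\star}(\mathbf 1+g)\le\Phi_{a_\star}(\mathbf 1)$ for all $g\perp\mathbf 1$ by combining: (i) the strict negativity of the Hessian, $2\,\mathrm{Re}\,\langle Q_{a_\star}g,g\rangle\le -\delta\lVert g\rVert_{L^2}^2$ after renormalizing $\Phi_{a_\star}(\mathbf 1+g)/\lVert\mathbf 1+g\rVert^4$; (ii) a uniform bound on the cubic and quartic remainder terms of the form $C(\lVert g\rVert_{L^2}^3+\lVert g\rVert_{L^2}^4)$, which follows from the $L^2\to L^4$ Stein–Tomas inequality on $\mathbb S^{d-1}$ (valid for $d\ge 3$ since $4\ge\frac{2(d+1)}{d-1}$ iff $d\ge 3$) together with Hölder; and (iii) handling the region of large $\lVert g\rVert_{L^2}$ separately. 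For large perturbations one uses that the constant is the \emph{unique} maximizer of the \emph{unpenalized} sharp inequality in $d=3,4,\dots$? — no, that is exactly what is open; instead, for $\lVert g\rVert_{L^2}\ge\eta$ one exploits that $a_\star\lvert\int(\mathbf 1+g)\rvert^4=a_\star\lvert\mathbb S^{d-1}\rvert^2$ is \emph{constant} on the normalized sphere $\lVert\mathbf 1+g\rVert=\lVert\mathbf 1\rVert$ only to leading order, so really the penalization contributes a genuinely coercive term $-ca_\star\lVert g\rVert^2$ globally once one tracks $\lvert\int(\mathbf 1+g)\rvert=\lvert\langle\mathbf 1+g,\mathbf 1\rangle\rvert$ under the normalization constraint, while the extension part is bounded by the (possibly non-sharp but finite) constant from Stein–Tomas.

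The main obstacle I anticipate is Step 2: showing that $\lambda_0(a_\star)$ strictly exceeds $\sup_{\ell\ge1}\lambda_\ell(a_\star)$, which amounts to an explicit but delicate analysis of the Weber–Schafheitlin integrals $\int_0^\infty J_{\mu}(r)J_{\nu}(r)J_{\rho}(r)J_{\tau}(r)\,r^{1-d}\,\d r$ as functions of the indices, uniformly in $\ell$ and in the dimension $d\ge3$. A secondary difficulty is making the remainder estimates in Step 3 genuinely uniform, so that the same $a_\star$ and $\delta$ close the argument for \emph{all} $d\ge 3$ simultaneously, or else one accepts a $d$-dependent $a_\star(d)$ as the statement permits, which considerably simplifies (iii) since one may then treat each dimension with its own constants and only needs qualitative control of $\lambda_\ell(0)\to 0$ as $\ell\to\infty$.
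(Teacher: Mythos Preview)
Your approach is \emph{genuinely different} from the paper's, and worth contrasting. The paper never Taylor-expands around $\mathbf 1$ and never runs a local-to-global argument. Instead it decomposes the target weight $\frac{\mathbf 1}{(2\pi)^d}+a_\star\boldsymbol\delta$ as a sum $\sum_{n=1}^{2N}h_n$ of admissible weights, and for each $h_n$ proves the \emph{global} inequality~\eqref{eq:constants_maximize} directly: one applies Cauchy--Schwarz to the quartic form via either the magical identity~\eqref{eq_magical_id} or the non-magical identity~\eqref{eq_non_magical_id}, reducing to a quadratic form whose Funk--Hecke eigenvalues are arranged (by the inductive choice of the polynomial coefficients in $\widehat h_n$) to be nonpositive for all $\ell\ge 1$. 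The key spectral facts are Lemma~\ref{lem_FoschiEvNonPos} (the Foschi kernel eigenvalues $\lambda_{\mathbf 1}(2\ell)$ are nonpositive once $\ell\gtrsim d$) and Lemma~\ref{lem_eigenvalues} (the kernels $K_{2m},L_{2m}$ have a single positive eigenvalue at a predictable index, zero thereafter), which together drive an explicit triangular elimination. Summing the $2N$ inequalities yields the theorem with an in-principle computable $a_\star$.

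Your perturbative route can be made to work, but several points are muddled. First, on $\{\mathbf 1\}^\perp$ the penalty term is \emph{constant}, so $Q_a=Q_0$ there; the penalty enters only through the comparison value $\Phi_a(\mathbf 1)/\lVert\mathbf 1\rVert^4$, which raises the threshold the Hessian must sit below. You therefore do not need Weber--Schafheitlin asymptotics or $\lambda_\ell(0)\to 0$: it suffices that $Q_0$ is \emph{bounded} on $L^2$ (immediate from Stein--Tomas and H\"older), since then $Q_0\le \frac{2\Phi_a(\mathbf 1)}{\lvert\mathbb S^{d-1}\rvert}I$ on $\{\mathbf 1\}^\perp$ once $a$ is large. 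Second, your Step~3 is essentially correct but the bookkeeping should be: choose $a$ large so the Hessian gap $\delta(a)\sim a$ dominates the Stein--Tomas remainder constant on $\{\lVert g\rVert<\epsilon\}$ for some fixed $\epsilon$, and separately, for $\lVert g\rVert\ge\epsilon$ (including the case $\int f=0$), bound the extension part by the non-sharp Stein--Tomas constant and use the strict Cauchy--Schwarz deficit $\lvert\mathbb S^{d-1}\rvert^2-\lvert\int f\rvert^4/\lVert f\rVert^4\ge c_\epsilon>0$. What your approach buys is conceptual simplicity and brevity; what it costs is explicitness (your $a_\star$ depends on the unknown sharp Stein--Tomas constant) and a clean equality characterization, whereas the paper's constructive decomposition yields both directly.
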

In \eqref{eq_main_conclusion}, $\widehat{f\sigma}$ denotes the Fourier extension operator from the unit sphere $\mathbb S^{d-1}\subset\mathbb R^d$, 
\begin{equation}\label{eq_ExtensionDef}
    \widehat{f\sigma}(x)=\int_{\mathbb S^{d-1}} f(\omega)e^{i\omega\cdot x}\,\d\sigma(\omega).
\end{equation}
Theorem \ref{thm_main} is known to hold with $a_\star=0$ when $d=3$, established in \cite{Fo15}, and when $d\in\{4,5,6,7\}$, established in \cite{COS15}.\footnote{A consequence of the modulation symmetry of \eqref{eq_main_conclusion} when $a_\star=0$ is  that the complete set of maximizers then consists of {\it characters}, i.e.\@ functions of the form $f(\omega)=c\exp(i\xi\cdot\omega)$ for some $(c,\xi)\in(\mathbb C\setminus\{0\})\times\R^d$; see \cite{CS12b, OSQ21} in addition to the aforementioned \cite{COS15, Fo15}.}
More recently, Theorem \ref{thm_main} was established when $d=8$ for all $a_\star>\frac{2^{25}\pi^2}{5^27^2 11}$ \cite[Theorem 4]{CNOS21}. 

\medskip

The proof of Theorem \ref{thm_main} relies on the following elementary observation. If, for each $1\leq n\leq N$, 
\begin{equation}\label{eq_inequality_summand}
    \frac{\int_{\mathbb R^d} \lvert \widehat{f\sigma}(x)\rvert^4h_n(x)\, \d x}{\lVert f\rVert_{L^2(\mathbb S^{d-1})}^4} 
    \le 
    \frac{\int_{\mathbb R^d} \lvert \widehat{\sigma}(x)\rvert^4h_n(x)\, \d x}{\lVert \mathbf 1\rVert_{L^2(\mathbb S^{d-1})}^4},
\end{equation}
then it also holds that
\begin{equation}\label{eq_summing_inequalities}
    \frac{\int_{\mathbb R^d} \lvert \widehat{f\sigma}(x)\rvert^4\sum_{n=1}^N h_n(x)\, \d x}{\lVert f\rVert_{L^2(\mathbb S^{d-1})}^4} 
    \le 
    \frac{\int_{\mathbb R^d} \lvert \widehat{\sigma}(x)\rvert^4\sum_{n=1}^N h_n(x)\, \d x}{\lVert \mathbf 1\rVert_{L^2(\mathbb S^{d-1})}^4}.
\end{equation}
Inequality~\eqref{eq_main_conclusion} is of the form \eqref{eq_summing_inequalities}, for a certain family of weights $\{h_n\}_{n=1}^N$, which we will determine, satisfying  
\begin{equation}\label{eq_main_conclusion_delta_version}
    \sum_{n=1}^N h_n=\frac{\bf 1}{(2\pi)^d}+ a{\boldsymbol \delta},
\end{equation}
where $\boldsymbol{\delta}$ denotes the Dirac measure on $\mathbb R^d$.
\begin{remark}\label{rem:motivating_remark}
    It follows from the previous observation that \eqref{eq_main_conclusion} continues to hold if $a_\star$ is replaced by any larger value. Indeed, for each $\lambda\ge 0$ we have
    \begin{equation}\label{eq:explain_larger_astar}
        \frac{\displaystyle \int_{\mathbb R^d} \lvert \widehat{f\sigma}\rvert^4\, \frac{\d x}{(2\pi)^d} + (a_\star+\lambda)\left\lvert \int_{\mathbb S^{d-1}} f\, \d\sigma\right\rvert^4}{\lVert f \rVert_{L^2(\mathbb S^{d-1})}^4 } = 
          \frac{\displaystyle \int_{\mathbb R^d} \lvert \widehat{f\sigma}\rvert^4\, \frac{\d x}{(2\pi)^d} + a_\star\left\lvert \int_{\mathbb S^{d-1}} f\, \d\sigma\right\rvert^4}{\lVert f \rVert_{L^2(\mathbb S^{d-1})}^4 } +
            \lambda\frac{\displaystyle  \left\lvert \int_{\mathbb S^{d-1}} f\, \d\sigma\right\rvert^4}{\lVert f \rVert_{L^2(\mathbb S^{d-1})}^4 },
    \end{equation}
    and Hölder's inequality implies that the latter summand on the right-hand side is  maximized when $f=\boldsymbol{1}$. It follows that the left-hand side is likewise maximal at $f=\boldsymbol{1}$. 
    
    \medskip
    
    It is conjectured that~\eqref{eq_main_conclusion} should hold with $a_\star=0$ in all dimensions $d\geq 3$, that is, constant functions should be global maximizers for the $L^2\to L^4(\R^d)$ extension inequality from $\mathbb S^{d-1}$. Theorem~\ref{thm_main} serves as an intermediate step, and adds weight to this conjecture.
\end{remark}

The analysis in \cite{Fo15} for $d=3$  consists of three main steps: a {\it magical identity}, an ingenious application of the Cauchy--Schwarz inequality, and a careful spectral analysis of the resulting quadratic form.
As observed in \cite{COS15}, the main obstruction for the analysis in \cite{Fo15} to work in dimensions $d\geq 8$ is tied to the fact that the eigenvalues resulting from the third step  do not have the correct signs.  
In \cite{CNOS21}, we 
circumvented this issue when $d=8$ via a careful application of the magical identity combined with a {\it non-magical identity}. In the present paper, we handle the higher dimensional case $d>8$ via an inductive procedure which consists in alternatingly applying magical and non-magical identities to expressions involving the functions $h_n$ from \eqref{eq_main_conclusion_delta_version}. This is coupled with an asymptotic spectral analysis that ensures that the eigenvalues {\it eventually} have the desired sign, leading to inequality \eqref{eq_inequality_summand} and therefore to \eqref{eq_summing_inequalities}, thus concluding the proof of Theorem \ref{thm_main}. We remark that this method can also be applied for $d\in \{3,4,5,6,7\}$. In this case, all eigenvalues have the correct sign;
the above inductive procedure thus reduces to a single step, proving Theorem~\ref{thm_main} with $a_\star=0$. This is an alternative proof of~\cite{Fo15} (for $d=3$) and~\cite{COS15} (for the remaining values of $d$). Our proof is different in two aspects. Firstly, our magical identity is now obtained via integration by parts, importing the method of~\cite{CNOS21}. Secondly, our spectral analysis is also based on integration by parts, via the Rodrigues formula. Note, however, that in the present paper we do not address the complete characterization of maximizers in the $a_\star=0$ case; for that, see~\cite{COS15}. 

    \medskip

The paper is structured as follows. In \S\ref{sec_proof}, we prove Theorem \ref{thm_main} modulo the proofs of the eigenvalue Lemmata \ref{lem_FoschiEvNonPos} and \ref{lem_eigenvalues}. This in turn is accomplished in
\S\ref{sec_evs}.

\section{Proof of Theorem \ref{thm_main}}\label{sec_proof}

We say that $h$ is an {\it admissible weight} if it is of the form
\begin{equation}\label{eq_admissible_weights}
    \widehat{h}=\boldsymbol{\delta}+ \sum_{j=0}^{J}C_{2j}\lvert \cdot\rvert^{2j},
\end{equation}
where $J\in\mathbb N$ and $C_{2j}\in \mathbb R$ are such that $\widehat{h}\ge 0$ on the closed ball $\overline{B}_4:=\{\xi\in\mathbb R^d:\ \lvert\xi\rvert\le 4\}$.
Admissible weights are related to  assumptions (R1)--(R2) from \cite[\S1]{CNOS21}. In this paper we will only consider admissible weights, which in particular enables us to restrict attention to  nonnegative even functions $f\in L^2(\mathbb S^{d-1})$; see \cite[\S2.1--\S2.2]{CNOS21} for details.
In this case, recalling \eqref{eq_ExtensionDef}, we have that $\widehat{f\sigma}$ is then real-valued, and Fubini's theorem implies
    \begin{equation}\label{eq_non_magical_id}
        \tag{NM}
        \int_{\mathbb R^d} \left( \widehat{f\sigma}(x)\right)^4 h(x)\, \d x = \int_{(\mathbb S^{d-1})^4} \widehat{h}\left(\sum_{j=1}^4 \omega_j\right) \prod_{j=1}^4 f(\omega_j)\d\sigma(\boldsymbol{\omega}),
    \end{equation}
where we abbreviated $\d\sigma(\boldsymbol{\omega}):=\prod_{j=1}^4 \d\sigma(\omega_j)$.
We refer to \eqref{eq_non_magical_id} as the {\it non-magical identity}. 
Its {\it magical identity} counterpart was  observed in \cite[Eq.~(3.10)]{CNOS21}:
    \begin{equation}\label{eq_magical_id}
        \tag{M}
        \int_{\mathbb R^d} \left( \widehat{f\sigma}(x)\right)^4 h(x)\, \d x = \int_{(\mathbb S^{d-1})^4} \widehat{h}\left(\sum_{j=1}^4 \omega_j\right) M(\boldsymbol{\omega})\prod_{j=1}^4 f(\omega_j)\d\sigma(\boldsymbol{\omega}),
    \end{equation}
    where the function $M$ is given by
\begin{equation}\label{eq_bigM}
    M(\boldsymbol{\omega}):=\frac14\left( \lvert \omega_1+\omega_2\rvert^2+\lvert \omega_3+\omega_4\rvert^2 -(\omega_1+\omega_2)\cdot(\omega_3+\omega_4)\right).
\end{equation}
We write $\d\sigma(\boldsymbol{\omega}_{ij}):=\d\sigma(\omega_i)\d\sigma(\omega_j)$ and, in connection to \eqref{eq_magical_id}--\eqref{eq_non_magical_id}, consider  the kernels 
\begin{align}
    &K_{\mathrm{M}}(\omega_1\cdot\omega_2):=\int_{(\mathbb S^{d-1})^2} \widehat{h}\left(\sum_{j=1}^4 \omega_j\right) M(\boldsymbol{\omega})\d\sigma(\boldsymbol{\omega}_{34});\label{eq_mag_kernel}\\
        &K_{\mathrm{NM}}(\omega_1\cdot\omega_2):=\int_{(\mathbb S^{d-1})^2} \widehat{h}\left(\sum_{j=1}^4 \omega_j\right) \d\sigma(\boldsymbol{\omega}_{34}).\label{eq:non_mag_kernel}
\end{align}
Let $\Lambda_{\mathrm{M}}(k), \Lambda_{\mathrm{NM}}(k)$ for  $k\in \mathbb N$ denote the corresponding eigenvalues; see \eqref{eq_lambda_k_definition} below for the precise definition. The fact that $f$ is an even function implies that only those eigenvalues corresponding to even values of $k$ will be of importance.
The following result motivates the rest of the analysis.
\begin{proposition}\label{prop_fund_mechanism}
 If $\Lambda_{\mathrm{M}}(2\ell)\le 0$ for all $\ell\ge 1$, or $\Lambda_{\mathrm{NM}}(2\ell)\le 0$ for all $\ell\ge 1$,  then for every $f\in L^2(\mathbb S^{d-1})$ we have that
\begin{equation}\label{eq:constants_maximize}
    \frac{\displaystyle\int_{\mathbb R^d} \lvert \widehat{f\sigma}(x)\rvert^4h(x)\, \d x}{\lVert f\rVert_{L^2(\mathbb S^{d-1})}^4} 
    \le 
    \frac{\displaystyle\int_{\mathbb R^d} \lvert \widehat{\sigma}(x)\rvert^4h(x)\, \d x}{\lVert \mathbf 1\rVert_{L^2(\mathbb S^{d-1})}^4}.
\end{equation}
\end{proposition}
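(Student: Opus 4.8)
The strategy is to expand both sides of \eqref{eq:constants_maximize} using a single identity—either the magical identity \eqref{eq_magical_id} or the non-magical identity \eqref{eq_non_magical_id}—and reduce the inequality to a statement about a quadratic form whose eigenvalues are exactly the $\Lambda_{\mathrm{M}}(k)$ (resp.\ $\Lambda_{\mathrm{NM}}(k)$). First I would reduce to nonnegative even $f$: for an admissible weight $h$ this is legitimate by the reduction recalled after \eqref{eq_admissible_weights} (see \cite[\S2.1--\S2.2]{CNOS21}), and it is what makes $\widehat{f\sigma}$ real-valued so that \eqref{eq_magical_id}--\eqref{eq_non_magical_id} apply. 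By homogeneity we may also normalize $\lVert f\rVert_{L^2(\mathbb S^{d-1})}=1$.

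Next I would carry out the integration in the $\omega_3,\omega_4$ variables. Starting from, say, \eqref{eq_non_magical_id}, Fubini gives
\begin{equation}
    \int_{\mathbb R^d}\bigl(\widehat{f\sigma}(x)\bigr)^4 h(x)\,\d x=\int_{(\mathbb S^{d-1})^2} K_{\mathrm{NM}}(\omega_1\cdot\omega_2)\,f(\omega_1)f(\omega_2)\,\d\sigma(\boldsymbol{\omega}_{12}),
\end{equation}
where $K_{\mathrm{NM}}$ is the kernel in \eqref{eq:non_mag_kernel}; here one uses that $\widehat{h}(\sum_j\omega_j)$ is a polynomial in the inner products $\omega_i\cdot\omega_j$ and that the $\omega_3,\omega_4$ integral of $f(\omega_3)f(\omega_4)$ times such a polynomial, after expanding, only leaves a function of $\omega_1\cdot\omega_2$ once we Funk--Hecke-expand $f$ into spherical harmonics—more precisely, the bilinear form $Q_{\mathrm{NM}}(f,f):=\int K_{\mathrm{NM}}(\omega_1\cdot\omega_2)f(\omega_1)f(\omega_2)\,\d\sigma(\boldsymbol{\omega}_{12})$ is diagonalized by the decomposition $L^2(\mathbb S^{d-1})=\bigoplus_k\mathcal H_k$ into spherical harmonics, with $Q_{\mathrm{NM}}(Y,Y)=\Lambda_{\mathrm{NM}}(k)\lVert Y\rVert_2^2$ for $Y\in\mathcal H_k$. (This is the content of the definition \eqref{eq_lambda_k_definition} referenced in the text, and is a standard consequence of the Funk--Hecke theorem since $K_{\mathrm{NM}}$ is a zonal kernel.) Writing $f=\sum_k Y_k$ with $Y_k\in\mathcal H_k$, and using that $f$ even kills all odd $k$, we get
\begin{equation}
    \int_{\mathbb R^d}\bigl(\widehat{f\sigma}(x)\bigr)^4 h(x)\,\d x=\sum_{\ell\ge 0}\Lambda_{\mathrm{NM}}(2\ell)\,\lVert Y_{2\ell}\rVert_{L^2(\mathbb S^{d-1})}^2 .
\end{equation}
Applying this with $f=\mathbf 1$ (which lies in $\mathcal H_0$) gives $\int_{\mathbb R^d}(\widehat\sigma)^4 h\,\d x=\Lambda_{\mathrm{NM}}(0)\,\lVert\mathbf 1\rVert_2^2$, so the right-hand side of \eqref{eq:constants_maximize} equals $\Lambda_{\mathrm{NM}}(0)/\lVert\mathbf 1\rVert_2^2$.

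Now the inequality to prove becomes, after multiplying out and using $\lVert f\rVert_2^2=\sum_\ell\lVert Y_{2\ell}\rVert_2^2=1$,
\begin{equation}
    \sum_{\ell\ge 0}\Lambda_{\mathrm{NM}}(2\ell)\,\lVert Y_{2\ell}\rVert_2^2\ \le\ \Lambda_{\mathrm{NM}}(0)\Bigl(\sum_{\ell\ge 0}\lVert Y_{2\ell}\rVert_2^2\Bigr)^2=\Lambda_{\mathrm{NM}}(0)\sum_{\ell\ge 0}\lVert Y_{2\ell}\rVert_2^2,
\end{equation}
i.e.\ $\sum_{\ell\ge 1}\bigl(\Lambda_{\mathrm{NM}}(2\ell)-\Lambda_{\mathrm{NM}}(0)\bigr)\lVert Y_{2\ell}\rVert_2^2\le 0$. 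Since all coefficients $\lVert Y_{2\ell}\rVert_2^2$ are nonnegative, this holds for every admissible $f$ precisely when $\Lambda_{\mathrm{NM}}(2\ell)\le\Lambda_{\mathrm{NM}}(0)$ for all $\ell\ge 1$; and the hypothesis $\Lambda_{\mathrm{NM}}(2\ell)\le 0$ for all $\ell\ge 1$ suffices once we know $\Lambda_{\mathrm{NM}}(0)\ge 0$—which is immediate because $\widehat h\ge 0$ on $\overline B_4$ and $|\sum_{j=1}^4\omega_j|\le 4$, so the integrand defining $\Lambda_{\mathrm{NM}}(0)=\int_{(\mathbb S^{d-1})^4}\widehat h(\sum_j\omega_j)\,\d\sigma(\boldsymbol\omega)$ is nonnegative. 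The identical argument run through \eqref{eq_magical_id} in place of \eqref{eq_non_magical_id} covers the other alternative (one checks $M\ge 0$ pointwise from \eqref{eq_bigM}, since $|u|^2+|v|^2-u\cdot v=\tfrac12|u|^2+\tfrac12|v|^2+\tfrac12|u-v|^2\ge 0$, so again $\Lambda_{\mathrm{M}}(0)\ge 0$). The one genuine point requiring care—the main obstacle—is justifying the spectral decomposition rigorously: that the zonal kernels $K_{\mathrm{M}},K_{\mathrm{NM}}$ are bounded (hence define compact self-adjoint operators on $L^2(\mathbb S^{d-1})$) and that termwise interchange of the spherical-harmonic sum with the integral is legitimate, so that $Q(f,f)=\sum_k\Lambda(k)\lVert Y_k\rVert_2^2$ genuinely holds for all $f\in L^2$; this is where one invokes the Funk--Hecke theorem and the explicit polynomial nature of $\widehat h$, and it is essentially bookkeeping already set up in \cite{CNOS21}, but it is the step that must be written with the most attention.
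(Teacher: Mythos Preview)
There is a genuine gap: the displayed identity
\[
\int_{\mathbb R^d}\bigl(\widehat{f\sigma}(x)\bigr)^4 h(x)\,\d x
=\int_{(\mathbb S^{d-1})^2} K_{\mathrm{NM}}(\omega_1\cdot\omega_2)\,f(\omega_1)f(\omega_2)\,\d\sigma(\boldsymbol{\omega}_{12})
\]
is false for general $f$. The left-hand side is \emph{quartic} in $f$, while the right-hand side is \emph{quadratic}. When you ``integrate out $\omega_3,\omega_4$'' in \eqref{eq_non_magical_id}, the factors $f(\omega_3)f(\omega_4)$ stay inside the inner integral; you obtain
\[
\int_{(\mathbb S^{d-1})^2} f(\omega_1)f(\omega_2)\Bigl[\int_{(\mathbb S^{d-1})^2}\widehat h\Bigl(\textstyle\sum_j\omega_j\Bigr)f(\omega_3)f(\omega_4)\,\d\sigma(\boldsymbol\omega_{34})\Bigr]\d\sigma(\boldsymbol\omega_{12}),
\]
and the bracket is \emph{not} $K_{\mathrm{NM}}(\omega_1\cdot\omega_2)$ unless $f=\mathbf 1$. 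A quick sanity check: take $f=Y_2$ a nonzero degree-$2$ spherical harmonic. Your formula would give $\int(\widehat{f\sigma})^4 h=\Lambda_{\mathrm{NM}}(2)\lVert Y_2\rVert_2^2\le 0$ under the hypothesis, contradicting the nonnegativity of $\int(\widehat{f\sigma})^4 h$ that follows from $\widehat h\ge 0$ on $\overline B_4$ via \eqref{eq_non_magical_id}. So your spectral expansion of the quartic form, and everything downstream of it, does not hold.

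What is missing is precisely the Cauchy--Schwarz step that collapses the four-linear form to a bilinear one in $f^2$ (not in $f$). Since $\widehat h\ge 0$ on $\overline B_4$ (and $M\ge 0$ in the magical case), one writes the integrand in \eqref{eq_magical_id} or \eqref{eq_non_magical_id} as a nonnegative weight times $f(\omega_1)f(\omega_2)\cdot f(\omega_3)f(\omega_4)$ and applies Cauchy--Schwarz in $(\omega_1,\omega_2)$ versus $(\omega_3,\omega_4)$, obtaining
\[
\int_{\mathbb R^d}\bigl(\widehat{f\sigma}\bigr)^4 h\,\d x
\le \int_{(\mathbb S^{d-1})^2} K_{\bullet}(\omega_1\cdot\omega_2)\,f^2(\omega_1)f^2(\omega_2)\,\d\sigma(\boldsymbol\omega_{12}),
\qquad \bullet\in\{\mathrm M,\mathrm{NM}\}.
\]
Now one decomposes $f^2=\sum_{\ell\ge 0}Y_{2\ell}$ (this is why the paper also assumes $f\in L^4$ and removes it by density), applies Funk--Hecke, and drops the terms $\ell\ge 1$ by hypothesis. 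The remaining $\ell=0$ term is $\Lambda_\bullet(0)\lVert Y_0\rVert_2^2$ with $Y_0=\lVert f\rVert_2^2/\lvert\mathbb S^{d-1}\rvert$, i.e.\ a constant times $\lVert f\rVert_2^4$, which matches the right-hand side of \eqref{eq:constants_maximize} since equality holds throughout when $f=\mathbf 1$. Your remark that the spectral bookkeeping is ``the main obstacle'' is misdirected: the kernels are polynomial and Funk--Hecke is routine; the substantive step you omitted is Cauchy--Schwarz.
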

\noindent The proof of Proposition \ref{prop_fund_mechanism} follows the general strategy outlined in \cite{Fo15} and can be extracted from \cite{CNOS21}; as such, we shall be brief.
\begin{proof}[Proof sketch of Proposition \ref{prop_fund_mechanism}]
 We continue to assume that $f$ is  nonnegative and even. Additionally, we  require $f\in L^4(\mathbb S^{d-1})$; this can  be  removed via a standard density argument as in \cite[Proof of Lemma 12]{COS15}.
Consider the case when $\Lambda_{\mathrm{M}}(2\ell)\le 0$, for all $\ell\ge 1$. Applying the Cauchy--Schwarz inequality to the quartic form~\eqref{eq_magical_id} yields
    \begin{equation}\label{eq_magical_cauchy_schwarz}
    \int_{(\mathbb S^{d-1})^4} \widehat{h}\left(\sum_{j=1}^4 \omega_j\right) M(\boldsymbol{\omega})\prod_{j=1}^4 f(\omega_j)\d\sigma(\boldsymbol{\omega}) \le \int_{(\mathbb S^{d-1})^2} f^2(\omega_1)f^2(\omega_2)K_{\mathrm{M}}(\omega_1\cdot \omega_2) \d\sigma(\boldsymbol{\omega}_{12}),
\end{equation}
where the kernel $K_{\mathrm{M}}$ was defined in \eqref{eq_mag_kernel}.
Since $f^2$ is even and square-integrable, we can decompose it in spherical harmonics as $f^2=\sum_{\ell=0}^\infty Y_{2\ell}$. By the Funk--Hecke formula, see \eqref{eq_FH} below, and the assumption on the eigenvalues of $K_{\mathrm{M}}$, we have that
\begin{equation}\label{eq_magical_conclusion}
    \begin{split}
        \int_{(\mathbb S^{d-1})^2} f^2(\omega_1)f^2(\omega_2)K_{\mathrm{M}}(\omega_1\cdot \omega_2) \d\sigma(\boldsymbol{\omega}_{12})&=\Lambda_{\mathrm{M}}(0)\lVert Y_0\rVert_{L^2(\mathbb S^{d-1})}^2 +\sum_{\ell = 1}^\infty\Lambda_{\mathrm{M}}(2\ell) \lVert Y_{2\ell}\rVert_{L^2(\mathbb S^{d-1})}^2\\
        &\le \Lambda_{\mathrm{M}}(0)\lVert Y_0\rVert_{L^2(\mathbb S^{d-1})}^2,
   \end{split}
\end{equation}
from which~\eqref{eq:constants_maximize} follows. 
The case when  $\Lambda_{\mathrm{NM}}(2\ell)\le 0$ for all  $\ell\ge 1$ is analogous, one just has to invoke \eqref{eq_non_magical_id} instead of \eqref{eq_magical_id} in \eqref{eq_magical_cauchy_schwarz}. This concludes the sketch of the proof of Proposition \ref{prop_fund_mechanism}.
\end{proof}
The constant weight $h=\boldsymbol{1}/(2\pi)^d$ is especially important. Since $f$ is nonnnegative and even, we apply~\eqref{eq_magical_id} and the Cauchy--Schwarz inequality as in the  proof of Proposition \ref{prop_fund_mechanism} to obtain
\begin{equation}\label{eq:reproving_Foschi}
    \begin{split}
        \int_{\mathbb R^d} \left( \widehat{f\sigma}(x)\right)^4\, \frac{\d x}{(2\pi)^d}&=\int_{(\mathbb S^{d-1})^4} \boldsymbol\delta\left(\sum_{j=1}^4 \omega_j\right) M(\boldsymbol{\omega})\prod_{j=1}^4 f(\omega_j)\d\sigma(\boldsymbol{\omega}) \\
        &\le \int_{(\mathbb S^{d-1})^2} K_{\boldsymbol{1}}(\omega_1\cdot\omega_2)f^2(\omega_1)f^2(\omega_2)\, \d\sigma(\boldsymbol{\omega}_{12}),
    \end{split}
\end{equation}
where the kernel $K_{\boldsymbol{1}}$ is given by
\begin{equation}\label{eq:free_kernel}
    K_{\boldsymbol{1}}(\omega_1\cdot\omega_2):=\int_{(\mathbb S^{d-1})^2} \boldsymbol\delta\left(\sum_{j=1}^4 \omega_j\right)M(\boldsymbol{\omega}) \d\sigma(\boldsymbol{\omega}_{34}).
\end{equation}
Denote the even eigenvalues of $K_{\boldsymbol{1}}$ by  $\{\lambda_{\boldsymbol{1}}(2\ell)\}_{\ell\geq 1}$. These are not necessarily all nonpositive if the dimension $d$ is sufficiently large,\footnote{If this were the case, Proposition \ref{prop_fund_mechanism} would allows us to  obtain Theorem~\ref{thm_main}  for every $a_\star\geq 0$.} but it turns out that they are {\it eventually nonpositive}. This is the content of the next result, whose proof is deferred to the forthcoming \S\ref{sec_evs}.

\begin{lemma}\label{lem_FoschiEvNonPos}
For every $d\geq 3$, there exists $\ell_\star(d)\in\N$, such that $\lambda_{\boldsymbol{1}}(2\ell)\le 0$ for all $\ell> \ell_\star(d)$. 
Moreover, $\ell_\star(d)=O(d)$. For $d\in \{3,4,5,6,7\}$, one can take $\ell_\star(d)=0$.
\end{lemma}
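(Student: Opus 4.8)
The plan is to compute $\lambda_{\boldsymbol{1}}(2\ell)$ explicitly, or at least asymptotically in $\ell$, and then show that for $\ell$ large the dominant term is negative. First I would unwind the definition: the kernel $K_{\boldsymbol{1}}(\omega_1\cdot\omega_2)$ from \eqref{eq:free_kernel} involves the Dirac delta $\boldsymbol{\delta}(\sum_{j=1}^4\omega_j)$ weighted by $M(\boldsymbol\omega)$, integrated over $\omega_3,\omega_4\in\mathbb{S}^{d-1}$. The geometric constraint $\omega_3+\omega_4=-(\omega_1+\omega_2)$ forces $\omega_1\cdot\omega_2$ and $\omega_3\cdot\omega_4$ to be equal, and reduces the $(\omega_3,\omega_4)$-integral to an integral over a lower-dimensional sphere; a standard computation (already present in \cite{Fo15, COS15, CNOS21}) expresses $K_{\boldsymbol{1}}$ as an explicit function of $t=\omega_1\cdot\omega_2$, essentially a power of $(1+t)$ coming from the measure of the constraint set times the value of $M$, which on this set equals $\tfrac14\bigl(2|\omega_1+\omega_2|^2 + |\omega_1+\omega_2|^2\bigr)$-type expression — concretely proportional to $|\omega_1+\omega_2|^2 = 2(1+t)$. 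The upshot is $K_{\boldsymbol{1}}(t) = c_d (1+t)^{\alpha}$ for an explicit exponent $\alpha = \alpha(d)$ (up to the normalization of $\sigma$), with the relevant power being $\tfrac{d-3}{2}$ from the sphere-measure of the constraint set, shifted by the factor from $M$.

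Next I would apply the Funk--Hecke formula \eqref{eq_FH} to read off the eigenvalues: $\lambda_{\boldsymbol{1}}(k)$ is, up to positive normalizing constants, the integral $\int_{-1}^1 (1+t)^{\alpha}\, C_k^{(\nu)}(t)(1-t^2)^{\nu-1/2}\,dt$ where $\nu = \tfrac{d-2}{2}$ and $C_k^{(\nu)}$ is the Gegenbauer polynomial. Since $K_{\boldsymbol{1}}$ is a fixed power of $(1+t)$, this is a classical integral: it can be evaluated in closed form as a ratio of Gamma functions (a Rodrigues-formula integration by parts $k$ times, or a known Gegenbauer moment formula), giving $\lambda_{\boldsymbol{1}}(k) = c_d\cdot \frac{\Gamma(\,\cdot\,)}{\Gamma(\,\cdot\,)}$ with the sign governed by a factor like $\prod_{j=0}^{k-1}(\alpha - j)$ or, after simplification, by $\frac{\Gamma(\alpha+1)}{\Gamma(\alpha+1-k)}$. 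The key sign mechanism: once $k$ exceeds $\alpha+1$, the argument $\alpha+1-k$ of the Gamma function in the denominator crosses through the negative integers/poles, and the sign of $\lambda_{\boldsymbol{1}}(k)$ alternates or stabilizes. Because we only care about even $k=2\ell$, I expect the relevant quantity to have a definite sign (nonpositive) for all even $k$ beyond a threshold comparable to $\alpha$, hence to $d$. Writing $\ell_\star(d) = \lceil \alpha/2\rceil$ or similar then gives both conclusions at once: $\lambda_{\boldsymbol{1}}(2\ell)\le 0$ for $\ell > \ell_\star(d)$, and $\ell_\star(d) = O(d)$ since $\alpha$ is linear in $d$.

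The main obstacle I anticipate is \emph{pinning down the sign bookkeeping} in the closed-form expression for $\lambda_{\boldsymbol{1}}(2\ell)$. The Gegenbauer moment $\int_{-1}^1(1+t)^\alpha C_{2\ell}^{(\nu)}(t)(1-t^2)^{\nu-1/2}\,dt$ need not itself be a clean single Gamma ratio when $\alpha$ is not an integer or half-integer; it may be a terminating ${}_2F_1$ or ${}_3F_2$ whose overall sign I must extract carefully. The delicate point is to confirm that for \emph{every} even index past the threshold the sign is nonpositive (as opposed to alternating), which I expect to follow from the structure of the hypergeometric sum — in the regime $2\ell > \alpha + O(1)$ a single term with a predictable sign dominates, or the ${}_2F_1$ at $t=-1$ endpoint evaluates via Gauss's theorem to a ratio whose sign is $(-1)^{\ell}\cdot(\text{something})$ and the $(-1)^\ell$ is cancelled by a companion factor. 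A secondary, more bookkeeping-level obstacle is tracking the precise normalization constants (powers of $2\pi$, surface measure $|\mathbb{S}^{d-2}|$, the $\tfrac14$ in $M$) so that the final threshold $\ell_\star(d)$ is genuinely $O(d)$ with an explicit constant; but these are routine once the structural sign statement is in hand. I would organize the write-up so that the explicit formula for $K_{\boldsymbol{1}}$ and for $\lambda_{\boldsymbol{1}}(2\ell)$ are stated first (citing the analogous computations in \cite{Fo15, CNOS21}), and the sign analysis of the Gamma/hypergeometric expression is the one genuinely new computation.
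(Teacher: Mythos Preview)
Your overall strategy—compute $K_{\boldsymbol{1}}$ explicitly, feed it into Funk--Hecke, then use the Rodrigues formula and $k$ integrations by parts to read off the sign of $\lambda_{\boldsymbol{1}}(2\ell)$—is exactly what the paper does. But your formula for $K_{\boldsymbol{1}}$ is wrong, and this breaks the sign mechanism you propose.

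You assert $K_{\boldsymbol{1}}(t)=c_d(1+t)^{\alpha}$. In fact the autoconvolution satisfies $(\sigma\ast\sigma)(x)=c_d\,|x|^{-1}(4-|x|^2)_+^{(d-3)/2}$; with $|x|^2=|\omega_1+\omega_2|^2=2(1+t)$ this contributes $(1+t)^{-1/2}(1-t)^{(d-3)/2}$, and multiplying by $M=\tfrac34|\omega_1+\omega_2|^2=\tfrac32(1+t)$ on the support of the delta yields
\[
K_{\boldsymbol{1}}(t)=C_d\,(1+t)^{1/2}(1-t)^{(d-3)/2}.
\]
The $d$-dependent exponent sits on $(1-t)$, not $(1+t)$. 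After Rodrigues and integration by parts one must therefore control the sign of $\tfrac{d^k}{dt^k}\bigl((1-t)^{1/2}(1+t)^{(d-3)/2}\bigr)$ (up to the reflection $t\mapsto -t$), which is a genuine product of two factors; it does \emph{not} collapse to a single Pochhammer $\prod_{j=0}^{k-1}(\alpha-j)$ times a positive Beta integral, so the ``$\Gamma(\alpha+1)/\Gamma(\alpha+1-k)$'' sign story you sketch does not apply.

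The paper's argument is elementary rather than special-function-theoretic. For odd $d$, $(1+t)^{(d-3)/2}=(1+t)^n$ is a polynomial; expanding the $k$-th derivative by Leibniz, every cross term carries a factor $\tfrac{d^{k-j}}{dt^{k-j}}((1-t)^{1/2})$ with $k-j\ge 1$, which is negative on $(-1,1)$ since $(\tfrac12)^{\underline{k-j}}<0$, while the one potentially positive term (all derivatives on $(1+t)^n$) vanishes once $k>n=\tfrac{d-3}{2}$. For even $d$ neither factor is polynomial; the paper instead writes the product as $(1-t^2)^{1/2}(1+t)^n$, expands $(1-t^2)^{1/2}=\sum_{m\ge 0}\binom{1/2}{m}(-1)^m t^{2m}$, and uses $\binom{1/2}{m}(-1)^m<0$ for $m\ge 1$ together with parity to conclude. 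In both parities the threshold is $k>\tfrac{d-3}{2}$ (resp.\ $\tfrac{d-4}{2}$), which gives $\ell_\star(d)=O(d)$ directly. Your anticipated ${}_2F_1/{}_3F_2$ sign extraction is neither needed nor, with the correct two-factor kernel, likely to be cleaner than this.
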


We also need to control the sign of the (even) eigenvalues of the  kernels
\begin{equation}\label{eq_kernels}
    \begin{split}
        K_n(\omega_1\cdot\omega_2):=\int_{(\mathbb S^{d-1})^2}\left\lvert \sum_{j=1}^4 \omega_j \right\rvert^n M(\boldsymbol{\omega})\, \d\sigma(\boldsymbol{\omega}_{34}), \\ 
        L_n(\omega_1\cdot\omega_2):=\int_{(\mathbb S^{d-1})^2}\left\lvert \sum_{j=1}^4 \omega_j \right\rvert^n \, \d\sigma(\boldsymbol{\omega}_{34}),
    \end{split}
\end{equation}
respectively denoted by $\lambda_n(2\ell)$ and $\mu_n(2\ell)$. It suffices to consider the case of even $n$, which is the content of the next result, proved in  \S\ref{sec_evs}.
\begin{lemma}\label{lem_eigenvalues} Let $k\in2\mathbb{Z}_{\geq 0}$ be even and let $m\in\mathbb{Z}_{\geq 0}$.
    The eigenvalues of $K_{2m}$ satisfy  
\begin{equation}\label{eq:magic_eigenvalue_lemma}
        \begin{split}
            \lambda_{2m}(k) >0,& \quad k=m+1,\\ 
            \lambda_{2m}(k)=0, & \quad k >m+1,
        \end{split}
    \end{equation}
    and the eigenvalues of $L_{2m}$ satisfy
\begin{equation}\label{eq:nonmagic_eigenvalue_lemma}
        \begin{split}
            \mu_{2m}(k) >0,& \quad k=m,\\ 
             \mu_{2m}(k)=0, & \quad k >m.
        \end{split}
    \end{equation}
\end{lemma}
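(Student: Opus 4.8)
The plan is to compute the kernels $K_{2m}$ and $L_{2m}$ explicitly enough to read off their spherical-harmonic expansions, exploiting the fact that $\lvert\sum_{j=1}^4\omega_j\rvert^{2m}$ is a polynomial in the inner products $\omega_i\cdot\omega_j$, hence of bounded degree. First I would expand $\big\lvert \omega_1+\omega_2+\omega_3+\omega_4\big\rvert^{2m} = \big(2 + 2\textstyle\sum_{i<j}\omega_i\cdot\omega_j\big)^m$ using the multinomial theorem, and integrate out $\omega_3,\omega_4$ against $\d\sigma(\boldsymbol{\omega}_{34})$. The key structural point is that after this integration the result is a polynomial in $t:=\omega_1\cdot\omega_2$ (and, for the magical kernel, the factor $M(\boldsymbol\omega)$ contributes $\lvert\omega_1+\omega_2\rvert^2 = 2+2t$ plus terms linear in $\omega_3+\omega_4$ which either vanish by symmetry or raise the degree by at most one): one checks that $L_{2m}(t)$ is a polynomial in $t$ of degree exactly $m$, and $K_{2m}(t)$ a polynomial of degree exactly $m+1$. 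This degree bookkeeping is the heart of the argument, so I would do it carefully, tracking that the top-degree coefficient does not accidentally vanish — that is the one genuinely delicate computation.

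Next I would invoke the Funk--Hecke formula \eqref{eq_FH} together with the classical fact that a zonal kernel which is a polynomial of degree $D$ in $t=\omega_1\cdot\omega_2$ has vanishing Gegenbauer coefficients (hence vanishing eigenvalues $\Lambda(k)$) for all $k>D$: this is because $t^D$ expands in Gegenbauer polynomials $C_k^{\lambda}(t)$ only for $k\le D$ of the same parity. Applied to $L_{2m}$ (degree $m$) this gives $\mu_{2m}(k)=0$ for $k>m$, and applied to $K_{2m}$ (degree $m+1$) it gives $\lambda_{2m}(k)=0$ for $k>m+1$, which are the two "$=0$" assertions. For the parity refinement note that since $m$ and $m+1$ have opposite parities and we only look at even $k$, the relevant cutoffs become exactly $k=m$ and $k=m+1$ in the two cases — precisely the boundary values named in the lemma.

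Finally I would establish the strict positivity at the extremal index. The eigenvalue at the top index $k=D$ is, up to an explicit positive normalization constant coming from Funk--Hecke, the leading Gegenbauer coefficient of the kernel, which is a positive multiple of the leading polynomial coefficient computed in the first step. For $L_{2m}$: the leading $t^m$ coefficient of $L_{2m}(t)$ arises from the term $(2\,\omega_1\cdot\omega_2)^m$ in the multinomial expansion of $(2+2\sum_{i<j}\omega_i\cdot\omega_j)^m$, whose coefficient is manifestly positive, so $\mu_{2m}(m)>0$. For $K_{2m}$: the leading $t^{m+1}$ coefficient comes from multiplying that same $t^m$ term by the $\tfrac14\lvert\omega_1+\omega_2\rvert^2 = \tfrac12(1+t)$ piece of $M(\boldsymbol\omega)$ (the cross term $-\tfrac14(\omega_1+\omega_2)\cdot(\omega_3+\omega_4)$ integrates, after using $\int \omega_3 \,\d\sigma(\omega_3)=0$ is false in general but its contribution is lower order; more precisely that cross term produces only powers $t^{\le m}$ after the $\omega_{34}$ integration since it carries no extra factor of $\omega_1\cdot\omega_2$), again with a positive coefficient, so $\lambda_{2m}(m+1)>0$. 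The main obstacle is the careful verification that no cancellation occurs at the leading order — in particular that the $M(\boldsymbol\omega)$-weighted integral genuinely attains degree $m+1$ and that the symmetrization over $(\omega_3,\omega_4)$ does not kill the top coefficient; once the explicit leading coefficient is in hand, its positivity is immediate, and the Funk--Hecke normalization constants are well known to be positive for all $k$ in the range $d\ge 3$.
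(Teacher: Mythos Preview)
Your proposal is correct and follows essentially the same strategy as the paper: show that $L_{2m}(t)$ and $K_{2m}(t)$ are polynomials in $t=\omega_1\cdot\omega_2$ of exact degrees $m$ and $m+1$ with positive leading coefficients, then invoke Funk--Hecke and orthogonality of Gegenbauer polynomials. The one substantive difference is the choice of expansion: you expand $\lvert\sum\omega_j\rvert^{2}$ as $4+2\sum_{i<j}\omega_i\cdot\omega_j$ (note the constant is $4$, not $2$) and run a multinomial in seven terms, whereas the paper groups things as $\alpha+\beta+\gamma$ with $\alpha=\lvert\omega_1+\omega_2\rvert^2$, $\beta=\lvert\omega_3+\omega_4\rvert^2$, $\gamma=2(\omega_1+\omega_2)\cdot(\omega_3+\omega_4)$ and uses a trinomial. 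The paper's grouping pays off in the ``no accidental cancellation'' step you flag as delicate: after integration, $\int\beta^J\gamma^K\,\d\sigma(\boldsymbol\omega_{34})$ is either $0$ ($K$ odd) or $C\lvert\omega_1+\omega_2\rvert^K$ ($K$ even), so every term $\alpha^i\beta^j\gamma^k$ with $i+j+k=m$ contributes degree $i+k/2\le m$ in $t$, with equality only at $(i,j,k)=(m,0,0)$; this isolates the leading coefficient $2^{m-1}\lvert\mathbb S^{d-1}\rvert^2$ for $K_{2m}$ in one line. Your six-inner-product expansion reaches the same conclusion but the degree bookkeeping (e.g.\ tracking that $\int t_{13}^p t_{23}^r\,\d\sigma(\omega_3)$ has degree $\le\min(p,r)$ in $t$, and similarly for mixed $\omega_3,\omega_4$ terms) is noticeably heavier. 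Also, the aside ``$\int\omega_3\,\d\sigma(\omega_3)=0$ is false in general'' is garbled --- that integral \emph{does} vanish; what you presumably mean is that the cross term in $M$ does not vanish once multiplied by other $\omega_3,\omega_4$-dependent factors from the expansion, which is true and is exactly what the paper's $\gamma^{k+1}$ analysis handles.
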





We are now ready to prove inequality~\eqref{eq_main_conclusion}. From Lemma \ref{lem_FoschiEvNonPos}  we know that, for $d\in \{3,4,5,6,7\}$, $\lambda_{\boldsymbol{1}}(2\ell)\le 0$ for all $\ell >0$. In this case, Proposition~\ref{prop_fund_mechanism} directly applies to the weight $h=1$, concluding the proof of Theorem~\ref{thm_main} with $a_\star=0$.

For $d\ge 8$, Lemma~\ref{lem_FoschiEvNonPos} guarantees the existence of    $N\in\mathbb N$ such that $\lambda_{\boldsymbol{1}}(2\ell)\le 0$ for $2\ell >N$. The proof of Lemma \ref{lem_FoschiEvNonPos} reveals that a valid choice is $N=\frac{d-3}2$ if $d$ is odd and $N=\frac{d-4}2$ if $d$ is even. Since $d\geq 8$, we may therefore assume that $N\geq 1$. With a view towards applying Proposition \ref{prop_fund_mechanism}, define functions $h_n$ for $1\leq n\leq 2N$ via their Fourier transforms as
\begin{equation}\scalebox{0.9}  
    {$
        \begin{array}{rlcccccc}
            \widehat{h}_1(\xi)&=\boldsymbol{\delta} &-C_{1, 4N-2}\lvert\xi\rvert^{4N-2} &-C_{1, 4N-4}\lvert\xi\rvert^{4N-4}&-C_{1, 4N-6}\lvert\xi\rvert^{4N-6}&-\ldots&-C_{1, 2}\lvert\xi\rvert^{2}&+C_{1,0}\\ 
            \widehat{h}_2(\xi)&=       &+C_{2, 4N-2}\lvert\xi\rvert^{4N-2} &-C_{2, 4N-4}\lvert\xi\rvert^{4N-4}&-C_{2, 4N-6}\lvert\xi\rvert^{4N-6}&-\ldots&-C_{2, 2}\lvert\xi\rvert^{2}&+C_{2,0}\\
            \widehat{h}_3(\xi)&=    &   &+C_{3, 4N-4}\lvert\xi\rvert^{4N-4}&-C_{3, 4N-6}\lvert\xi\rvert^{4N-6}&-\ldots&-C_{3, 2}\lvert\xi\rvert^{2}&+C_{3,0}\\
            \widehat{h}_4(\xi)&=    &   &&+C_{4, 4N-6}\lvert\xi\rvert^{4N-6}&-\ldots&-C_{4, 2}\lvert\xi\rvert^{2}&+C_{4,0}\\
            &\,\vdots& \\
            \widehat{h}_{2N}(\xi) &=&&&&&+C_{2N, 2}\lvert\xi\rvert^2&+C_{2N, 0},
        \end{array}
   $}
\end{equation}
and respectively let $\Lambda_{n, \mathrm{M}}$ and $\Lambda_{n, \mathrm{NM}}$  denote the eigenvalues of the kernels~\eqref{eq_mag_kernel} and~\eqref{eq:non_mag_kernel} corresponding to the function $h_n$. The nonnegative constants $C_{n,m}\ge 0$ will be chosen below, recalling that our target weight is $h=\frac{\boldsymbol{1}}{(2\pi)^d} +a\boldsymbol{\delta}$. Our first requirement is  \[\widehat{h}=\boldsymbol{\delta} +a\boldsymbol{1}=\sum_{n=1}^{2N}\widehat{h}_n,\] which translates into
\begin{equation}\label{eq:sum_condition}
    \begin{split}
        C_{2, 4N-2}&= C_{1, 4N-2}, \\ 
        C_{3, 4N-4}&=C_{1, 4N-4}+C_{2, 4N-4},\\ 
        &\vdots \\ 
        C_{2N, 2}&=C_{1, 2}+C_{2, 2}+\ldots+C_{2N-1, 2}.
    \end{split}
\end{equation}
In particular, Theorem~\ref{thm_main} will be proved for $a_\star=\sum_{n=1}^{2N}C_{n, 0}$.
The coefficients $(C_{n,m})$ will be chosen in such a way that the following two conditions hold for every $1\leq n\leq 2N$:
\medskip
\begin{enumerate}
        \item[(Adm)] $\widehat{h}_n\ge 0$ on $\overline{B}_4$;
        \item[(Eig)] For every  $\ell\ge 1$, $\displaystyle
            \begin{cases} 
                \Lambda_{n, \mathrm{M}}(2\ell)\le 0, & \text{if }n \text{ is odd,} \\ 
                \Lambda_{n, \mathrm{NM}}(2\ell)\le 0, & \text{if }n \text{ is even.}
            \end{cases}
            $
\end{enumerate}
\medskip
Condition (Adm) ensures that $h_n$ is an admissible weight; recall \eqref{eq_admissible_weights}.
Once this construction is done, we can apply Proposition~\ref{prop_fund_mechanism} to each of the $h_n$, thus completing the proof of Theorem~\ref{thm_main}. 

\medskip

In the following inductive scheme we adopt the following notational conventions:
\begin{itemize}
    \item Constants $C_{n,m}$ which remain undefined  are set to  $0$.
    \item  $(C_p\lambda)_q(r):=C_{p, q}\lambda_q(r)$ and $(C_p\mu)_q(r):=C_{p, q}\mu_q(r)$.
    \item  $\{x\}_+:=\max\{x, 0\}$.
\end{itemize}

\medskip
    
We start by constructing $(h_1, h_2)$, then we will turn to the remaining $(h_{2n-1}, h_{2n})$ for $2\leq n< N$, and finally to the last pair $(h_{2N-1}, h_{2N})$. In order to ensure that $h_1$ satisfies (Eig), we start by noting that, for $\ell\ge 1$, 
\begin{equation}\label{eq_h_one_eigenvals}
\Lambda_{1, \mathrm{M}}(2\ell)=\lambda_{\boldsymbol{1}}(2\ell) - (C_{1}\lambda)_{4N-2}(2\ell) -(C_{1}\lambda)_{4N-4}(2\ell)-\ldots-(C_{1}\lambda)_2(2\ell).
\end{equation}
By~\eqref{eq:magic_eigenvalue_lemma},  every term on the right-hand side of \eqref{eq_h_one_eigenvals} vanishes for $\ell>N$, except possibly for $\lambda_{\boldsymbol{1}}(2\ell)$, which is nonpositive. Thus $\Lambda_{1, \mathrm{M}}(2\ell)\le 0$ for $\ell>N$. If $\ell=N$, then
\begin{equation}\label{eq:h_one_dominant_eigenval}
    \Lambda_{1, \mathrm{M}}(2N)=\lambda_{\boldsymbol{1}}(2N)-(C_{1}\lambda)_{4N-2}(2N),
\end{equation}
and in light of ~\eqref{eq:magic_eigenvalue_lemma} we have that $\lambda_{4N-2}(2N)>0$. We therefore let 
\begin{equation}\label{eq_choose_first_coeff}
    C_{1, 4N-2}:= \left\{ \frac{\lambda_{\boldsymbol{1}}(2N)}{\lambda_{4N-2}(2N)}\right\}_+, 
\end{equation}
thus ensuring $\Lambda_{1, \mathrm{M}}(2N)\le 0$. Again by~\eqref{eq:magic_eigenvalue_lemma}, for $0<m< N$ we have $\lambda_{4N-2-4m}(2(N-m))>0$ and so we can let
\begin{equation}\label{eq_choose_second_coeff}
    C_{1, 4N-2-4m}:= \left\{\frac{\lambda_{\boldsymbol{1}}(2(N-m)) - \sum_{k=1}^{2m} (C_1\lambda)_{4N-2-4m+2k}(2(N-m))}{\lambda_{4N-2-4m}(2(N-m))}\right\}_+,
\end{equation}
ensuring $\Lambda_{1, \mathrm{M}}(2(N-m))\le 0$. Thus $h_1$ satisfies (Eig). Finally, we choose $C_{1, 0}\ge 0$ sufficiently large to ensure that $h_1$ satisfies (Adm) as well.

\medskip

We now turn to $h_2$. Here we apply the non-magical identity~\eqref{eq_non_magical_id}. For $\ell\ge 1$, we have
\begin{equation}\label{eq:h_two_eigenvals}
    \Lambda_{2, \mathrm{NM}}(2\ell)=(C_2\mu)_{4N-2}(2\ell) - (C_2\mu)_{4N-4}(2\ell) - \ldots - (C_2\mu)_{ 2}(2\ell).
\end{equation}
By~\eqref{eq:nonmagic_eigenvalue_lemma}, $\Lambda_{2, \mathrm{NM}}(2\ell)=0$ for $\ell> N-1$. Next we need to ensure that $\Lambda_{2, \mathrm{NM}}(2(N-1))\le 0$. Note that the value of $C_{2, 4N-2}$ is dictated by~\eqref{eq:sum_condition}; we cannot make any choice here. However, $\mu_{4N-4}(2(N-1))>0$, and so we let  
\begin{equation}\label{eq:choose_first_coeff_two}
    C_{2, 4N-4}:=\left\{\frac{(C_2\mu)_{ 4N-2}(2(N-1))}{\mu_{4N-4}(2(N-1))}\right\}_+,
\end{equation}
ensuring the desired $\Lambda_{2, \mathrm{NM}}(2(N-1))\le 0$. For  $0<m<N-1$, we then let
\begin{equation}\label{eq:choose_second_coeff_two}
    \scalebox{1}
    {$
        \begin{split}
            &C_{2, 4N-4-4m}:=\\
            &\left\{\frac{ (C_2\mu)_{4N-2}(2(N-1-m))-\sum_{k=1}^{2m} (C_2\mu)_{4N-4-4m+2k}(2(N-1-m))}{\mu_{4N-4-4m}(2(N-1-m))}\right\}_+,
        \end{split}
    $}
\end{equation}
thus ensuring that $\Lambda_{2, \mathrm{NM}}(2(N-1-m))\le 0$. So $h_2$ satisfies (Eig). Finally we choose $C_{2, 0}\ge 0$ sufficiently large so that $h_2$ satisfies (Adm) as well.

\medskip

We turn to constructing the pair $(h_{2n-1}, h_{2n})$ for each $2\leq n< N$:
\begin{equation}\scalebox{0.9}
        {$
        \begin{array}{rllllll}
            \widehat{h}_{2n-1}(\xi)&=C_{2n-1, 4N-4n+4}\lvert\xi\rvert^{4N-4n+4} &-C_{2n-1, 4N-4n+2}\lvert\xi\rvert^{4N-4n+2}&-\ldots&-C_{2n-1, 2}\lvert\xi\rvert^{2}&+C_{2n-1,0},\\ 
            \widehat{h}_{2n}(\xi)&=       &+C_{2n, 4N-4n+2}\lvert\xi\rvert^{4N-4n+2} &-\ldots&-C_{2n, 2}\lvert\xi\rvert^{2}&+C_{2n,0}.\\
        \end{array}
        $}
\end{equation}
At each step, the constant associated to  the highest degree term will be dictated by~\eqref{eq:sum_condition}, and we will choose the subsequent ones to ensure that (Eig) holds. Finally, we will choose the last constant $C_{\ast, 0}$ $(\ast\in\{2n-1,2n\})$ so that (Adm) holds. 
For $\ell\ge 1$, 
\begin{equation}\label{eq:general_odd_eigenvals}
    \Lambda_{2n-1, \mathrm{M}}(2\ell)=(C_{2n-1}\lambda)_{4N-4n+4}(2\ell) - (C_{2n-1}\lambda)_{4N-4n+2}(2\ell)-\ldots-(C_{2n-1}\lambda)_2(2\ell),
\end{equation}
so by~\eqref{eq:magic_eigenvalue_lemma} $\Lambda_{2n-1, \mathrm{M}}(2\ell)=0$ for $\ell> N-n+1$. The top coefficient $C_{2n-1, 4N-4n+4}$ is dictated by~\eqref{eq:sum_condition}. Since $\lambda_{4N-4n+2}(2(N-n+1))>0$, we  let
\begin{equation}\label{eq:choose_main_odd_coeff}
    C_{2n-1, 4N-4n+2}:=\left\{ \frac{(C_{2n-1}\lambda)_{4N-4n+4}(2(N-n+1))}{\lambda_{4N-4n+2}(2(N-n+1))}\right\}_+, 
\end{equation}
ensuring $\Lambda_{2n-1, \mathrm{M}}(2(N-n+1))\le 0$. By the same logic, for $0<m<N-n+1$ we let
\begin{equation}\label{eq_choose_odd_coeff} 
    \scalebox{0.98}{$
    \begin{split}
            &C_{2n-1, 4N-4n+2-4m}:=
            \\
            &\left\{ \frac{(C_{2n-1}\lambda)_{4N-4n+4}(2(N-n+1-m))-\sum_{k=1}^{2m}(C_{2n-1}\lambda)_{4N-4n+2-4m+2k}(2(N-n+1-m))}{\lambda_{4N-4n+2-4m}(2(N-n+1-m))}\right\}_+
        \end{split}
        $}
\end{equation}
ensuring $\Lambda_{2n-1, \mathrm{M}}(2(N-n+1-m))\le 0$. Thus $h_{2n-1}$ satisfies (Eig). Finally we choose $C_{2n-1,0}>0$ sufficiently large to ensure (Adm) as well. 

\medskip

To construct $h_{2n}$, we note that, for $\ell\ge 1$, 
\begin{equation}\label{eq_general_even_eigenvals}
    \Lambda_{2n, \mathrm{NM}}(2\ell)=(C_{2n}\mu)_{4N-4n+2}(2\ell) - (C_{2n}\mu)_{4N-4n}(2\ell)-\ldots-(C_{2n}\mu)_2(2\ell),
\end{equation}
so by~\eqref{eq:nonmagic_eigenvalue_lemma} $\Lambda_{2n, \mathrm{NM}}(2\ell)=0$ for $\ell> N-n$. The coefficient $C_{2n, 4N-4n+2}$ is dictated by~\eqref{eq:sum_condition}; since $\mu_{4N-4n}(2(N-n))>0$, letting 
 \begin{equation}\label{eq_choose_main_even_coeff}
    C_{2n, 4N-4n}:=\left\{ \frac{(C_{2n}\mu)_{4N-4n+2}(2(N-n))}{\mu_{4N-4n}(2(N-n))}\right\}_+
 \end{equation}
ensures that $\Lambda_{2n, \mathrm{NM}}(2(N-n))\le 0$. Finally, following the same logic again, we let for $0<m<N-n$, 
\begin{equation}\label{eq_choose_even_coeff}
    \begin{split}
        &C_{2n, 4N-4n-4m}:=
        \\&\left\{ \frac{(C_{2n}\mu)_{4N-4n+2}(2(N-n-m))-\sum_{k=1}^{2m}(C_{2n}\mu)_{4N-4n-4m+2k}(2(N-n-m))}{\mu_{4N-4n-4m}(2(N-n-m))}\right\}_+
    \end{split}
\end{equation}
ensuring $\Lambda_{2n, \mathrm{NM}}(2(N-n-m))\le 0$. Thus $h_{2n}$ satisfies (Eig). Finally we choose $C_{2n,0}>0$ sufficiently large to ensure (Adm). 

\medskip

We end with the last two weights:
\begin{equation}\scalebox{1}
        {$
        \begin{array}{rlll}
            \widehat{h}_{2N-1}(\xi)&=C_{2N-1, 4}\lvert\xi\rvert^{4} &-C_{2N-1, 2}\lvert\xi\rvert^{2}&+C_{2N-1,0},\\ 
            \widehat{h}_{2N}(\xi)&=&+C_{2N, 2}\lvert\xi\rvert^{2}&+C_{2N,0}.\\
        \end{array}
        $}
\end{equation}
For $\ell\ge 1$ we have
\begin{equation}\label{eq:last_magic_eigenval}
    \Lambda_{2N-1, \mathrm{M}}(2\ell)=(C_{2N-1}\lambda)_4(2\ell) - (C_{2N-1}\lambda)_2(2\ell).
\end{equation}
By~\eqref{eq:magic_eigenvalue_lemma}, $\Lambda_{2N-1, \mathrm{M}}(2\ell)=0$ for $\ell>1$. The constant $C_{2N-1, 4}$ is dictated by~\eqref{eq:sum_condition}. We let
\begin{equation}\label{eq:choose_last_constant}
    C_{2N-1, 2}:=\left\{ \frac{(C_{2N-1}\lambda)_4(2)}{\lambda_2(2)}\right\}_+
\end{equation}
ensuring $\Lambda_{2N-1,\mathrm{M}}(2)\le 0$ and so $h_{2N-1}$ satisfies (Eig). Then we let $C_{2N-1,0}>0$ be sufficiently large to satisfy (Adm). Finally, we turn to (for $\ell \geq 1$)
\begin{equation}\label{eq:last_nonmagic_eigenvals}
    \Lambda_{2N, \mathrm{NM}}(2\ell)=(C_{2N}\mu)_2(2\ell), 
\end{equation}
noting that, by~\eqref{eq:nonmagic_eigenvalue_lemma}, it vanishes for all $\ell \geq 1$. So $h_{2N}$ automatically satisfies (Eig). It suffices to choose $C_{2N, 0}>0$ sufficiently large to ensure (Adm). 

\medskip

This completes the proof of Theorem \ref{thm_main} modulo the proof of Lemmata \ref{lem_FoschiEvNonPos} and \ref{lem_eigenvalues}, which is the subject of \S\ref{sec_evs}, and the characterization of maximizers, which  follows from the analysis in \cite[\S8]{CNOS21}.

\section{Proofs of Lemmata \ref{lem_FoschiEvNonPos} and \ref{lem_eigenvalues}}\label{sec_evs}

The Funk--Hecke formula \cite[Theorem 1.2.9]{DX13} has already made an appearance. We state it here in the exact form which is needed for our purposes. Given $\nu\ge 0$, let $\{C_k^{(\nu)}\}_{k\in\mathbb N}$ denote the family of Gegenbauer polynomials, i.e., the system of orthogonal polynomials on $[-1, 1]$ with respect to the measure $(1-t^2)^{\nu-\frac12}\,\d t$. This system is unique up to normalization factors.
  \begin{lemma}[\cite{DX13}]\label{lem_Funk_Hecke}
    Let  $g=\sum_{k=0}^\infty Y_k\in L^2(\mathbb S^{d-1})$, where  $Y_k$ is a spherical harmonic of degree $k$. Let  $K:[-1, 1]\to\R$ be integrable with respect to the measure $(1-t^2)^\frac{d-3}{2}\,\d t$. Then 
    \begin{equation}\label{eq_FH}
        \int_{(\mathbb S^{d-1})^2} g(\omega_1)g(\omega_2) K(\omega_1\cdot \omega_2)\, \d\sigma(\boldsymbol{\omega}_{12}) =\sum_{k=0}^\infty \lambda(k) \lVert Y_k\rVert_{L^2(\mathbb S^{d-1})}^2, 
    \end{equation}
    where the eigenvalues $\lambda(k)$ are given by
    \begin{equation}\label{eq_lambda_k_definition}
        \lambda(k)=\frac{\lvert \mathbb S^{d-2}\rvert}{C_k^{(\frac{d}2 -1)}(1)}\int_{-1}^1 K(t)C_k^{(\frac{d}2 -1)}(t)(1-t^2)^{{\frac{d-3}2}}\, \d t.
    \end{equation}
\end{lemma}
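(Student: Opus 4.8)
The plan is to reduce \eqref{eq_FH} to the single computation
$I_{jk}:=\int_{(\mathbb{S}^{d-1})^2}Y_j(\omega_1)Y_k(\omega_2)K(\omega_1\cdot\omega_2)\,\d\sigma(\boldsymbol{\omega}_{12})=\delta_{jk}\,\lambda(k)\,\lVert Y_k\rVert_{L^2(\mathbb{S}^{d-1})}^2$,
after which \eqref{eq_FH} follows by expanding $g=\sum_k Y_k$, applying Fubini, and passing to the limit in the degree. Two elementary tools underpin everything. The first is the Funk slicing formula $\int_{\mathbb{S}^{d-1}}F(\eta\cdot\omega)\,\d\sigma(\omega)=\lvert\mathbb{S}^{d-2}\rvert\int_{-1}^1 F(t)(1-t^2)^{\frac{d-3}{2}}\,\d t$, valid for every fixed $\eta\in\mathbb{S}^{d-1}$; applied to $F=\lvert K\rvert$ it shows that the zonal kernel $(\eta,\omega)\mapsto K(\eta\cdot\omega)$ has rows and columns of $L^1(\mathbb{S}^{d-1})$-mass $A:=\lvert\mathbb{S}^{d-2}\rvert\int_{-1}^1\lvert K(t)\rvert(1-t^2)^{\frac{d-3}{2}}\,\d t<\infty$, so that Schur's test bounds the operator $T_Kg(\eta):=\int_{\mathbb{S}^{d-1}}K(\eta\cdot\omega)g(\omega)\,\d\sigma(\omega)$ by $A$ on $L^2(\mathbb{S}^{d-1})$; in particular the left-hand side of \eqref{eq_FH} is an absolutely convergent double integral for every $g\in L^2(\mathbb{S}^{d-1})$, of modulus $\le A\lVert g\rVert_{L^2(\mathbb{S}^{d-1})}^2$. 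The second tool is the Gegenbauer addition theorem, i.e.\ the reproducing-kernel identity $\sum_i Y_{k,i}(\eta)Y_{k,i}(\omega)=\frac{\dim\mathcal{H}_k}{C_k^{\nu}(1)\lvert\mathbb{S}^{d-1}\rvert}\,C_k^{\nu}(\eta\cdot\omega)$ with $\nu=\tfrac{d}{2}-1$, valid for any real orthonormal basis $\{Y_{k,i}\}$ of the degree-$k$ spherical harmonics $\mathcal{H}_k$ and any fixed normalization of $C_k^\nu$; the constant here is pinned down by restricting both sides to the diagonal $\eta=\omega$, integrating, and using $\sum_i\lVert Y_{k,i}\rVert_{L^2(\mathbb{S}^{d-1})}^2=\dim\mathcal{H}_k$.

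I would first establish $I_{jk}=\delta_{jk}\lambda(k)\lVert Y_k\rVert_{L^2(\mathbb{S}^{d-1})}^2$ for $K=C_m^\nu$ a Gegenbauer polynomial. Substituting the addition theorem into $I_{jk}$ and using $L^2$-orthogonality of spherical harmonics of distinct degrees kills all cross terms, leaving $I_{jk}=0$ unless $j=k=m$, and in that case Parseval's identity inside $\mathcal{H}_m$ gives $I_{mm}=\frac{C_m^\nu(1)\lvert\mathbb{S}^{d-1}\rvert}{\dim\mathcal{H}_m}\lVert Y_m\rVert_{L^2(\mathbb{S}^{d-1})}^2$. To see that this matches $\lambda(m)\lVert Y_m\rVert_{L^2(\mathbb{S}^{d-1})}^2$ with $\lambda(m)$ as in \eqref{eq_lambda_k_definition}, one needs the normalization identity $\lvert\mathbb{S}^{d-2}\rvert\,(\dim\mathcal{H}_m)\int_{-1}^1 C_m^\nu(t)^2(1-t^2)^{\frac{d-3}{2}}\,\d t=C_m^\nu(1)^2\lvert\mathbb{S}^{d-1}\rvert$, which I would derive by evaluating $\int_{\mathbb{S}^{d-1}}C_m^\nu(\eta\cdot\omega)^2\,\d\sigma(\omega)$ two ways — once by the slicing formula, once by squaring the addition theorem and using $\sum_i Y_{m,i}(\eta)^2=\dim\mathcal{H}_m/\lvert\mathbb{S}^{d-1}\rvert$. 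By linearity this gives \eqref{eq_FH} whenever $g$ is a finite sum of spherical harmonics and $K$ is a polynomial. To remove the polynomial restriction I would run a continuity argument in the norm of $L^1((1-t^2)^{\frac{d-3}{2}}\,\d t)$: for each fixed $k$ the map $K\mapsto\lambda(k)$ is continuous in that norm, with the uniform bound $\lvert\lambda(k)\rvert\le A$ — here one uses the standard Gegenbauer estimate $\lvert C_k^\nu(t)\rvert\le C_k^\nu(1)$ on $[-1,1]$ for $\nu>0$ — while $K\mapsto I_{jk}$, and indeed the whole bilinear form, is continuous in the same norm by Schur's test. Since $(1-t^2)^{\frac{d-3}{2}}\,\d t$ is a finite measure on $[-1,1]$ for $d\ge 3$, polynomials are dense in its $L^1$ space, and approximation closes the argument.

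Finally, for general $g=\sum_k Y_k\in L^2(\mathbb{S}^{d-1})$ I would apply $I_{jk}=\delta_{jk}\lambda(k)\lVert Y_k\rVert_{L^2(\mathbb{S}^{d-1})}^2$ to the partial sums $g_M:=\sum_{k\le M}Y_k$, obtaining $\int_{(\mathbb{S}^{d-1})^2}g_M(\omega_1)g_M(\omega_2)K(\omega_1\cdot\omega_2)\,\d\sigma(\boldsymbol{\omega}_{12})=\sum_{k\le M}\lambda(k)\lVert Y_k\rVert_{L^2(\mathbb{S}^{d-1})}^2$, and let $M\to\infty$: the left-hand side converges to the left-hand side of \eqref{eq_FH} by the $L^2$-boundedness (hence continuity) of the bilinear form established above, and the right-hand side converges absolutely because $\lvert\lambda(k)\rvert\le A$ uniformly in $k$ while $\sum_k\lVert Y_k\rVert_{L^2(\mathbb{S}^{d-1})}^2=\lVert g\rVert_{L^2(\mathbb{S}^{d-1})}^2<\infty$. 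The main obstacle — and essentially the only point that is not routine — is reconciling the merely $L^1$ integrability hypothesis on $K$ with the Hilbert-space tools (orthogonality, reproducing kernels) that natively want $K\in L^2$ or $K$ polynomial; this is precisely what the two-layer density/continuity step handles, kept quantitative by Schur's test and by the bound $\lvert C_k^\nu\rvert\le C_k^\nu(1)$. Everything else is the classical Funk--Hecke computation, and the claimed independence of $\lambda(k)$ from the normalization of $C_k^\nu$ is immediate from \eqref{eq_lambda_k_definition}, since a rescaling $C_k^\nu\mapsto c\,C_k^\nu$ cancels between numerator and denominator.
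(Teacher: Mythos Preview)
The paper does not prove this lemma; it is quoted verbatim as the Funk--Hecke formula from \cite{DX13} and used as a black box, so there is no ``paper's own proof'' to compare against. Your argument is correct and is essentially the standard textbook derivation: reduce to the case $K=C_m^\nu$ via the addition theorem, match constants using the $L^2$-normalization of Gegenbauer polynomials, and then close by density in $L^1\bigl((1-t^2)^{(d-3)/2}\,\d t\bigr)$, with Schur's test and the pointwise bound $\lvert C_k^\nu\rvert\le C_k^\nu(1)$ (valid here since $\nu=\tfrac d2-1>0$ for $d\ge 3$) supplying the requisite continuity and uniform eigenvalue control. The only comment worth making is that your two-layer density step, while clean, is slightly more elaborate than strictly necessary: one can also prove the single-integral identity $T_K Y_k=\lambda(k)Y_k$ directly for integrable $K$ by expanding $K$ in its Gegenbauer--Fourier series and integrating term by term (justified by dominated convergence against $(1-t^2)^{(d-3)/2}$), which is the route taken in \cite{DX13}. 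Both approaches are equivalent in substance.
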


The following notation will be convenient in the course of the proof of Lemma \ref{lem_FoschiEvNonPos}.

\begin{definition}[Falling factorial]\label{def:pokemon}
    Given $a\in \mathbb R$ and $n\in\mathbb N$, let $a^{\underline{n}}:=a(a-1)\ldots (a-n+1)$. 
\end{definition}
\begin{proof}[Proof of Lemma \ref{lem_FoschiEvNonPos}]
    The kernel $K_{\boldsymbol{1}}$ has been computed explicitly in~\cite[Eq.~(4.2)]{CNOS21}: 
\begin{equation}\label{eq_explicit_K_one}
    K_{\boldsymbol{1}}(t)=C_d(1+t)^\frac12(1-t)^\frac{d-3}{2},\,\,\,t\in[-1,1],
\end{equation}
where $C_d>0$ is a constant. In light of \eqref{eq_lambda_k_definition}, we  consider the integral 
\begin{equation}\label{eq:Akd_integral}
    A_k^{(d)}:=\int_{-1}^1 (1-t)^{\frac12}(1+t)^{\frac{d-3}{2}}C_k^{(\frac d2 -1)}(t)(1-t^2)^{\frac{d-3}{2}}\, \d t.
\end{equation}
For even $k$, the change of variables $t\mapsto -t$ reveals that the eigenvalue $\lambda_{\boldsymbol{1}}(k)$ equals a positive multiple of $A_k^{(d)}$. 

\medskip

We claim that there exists $k_\star(d)=O(d)$ such that $A_k^{(d)}\le 0$, for every even integer $k\ge k_\star(d)$. 
To prove this, recall the Rodrigues formula for Gegenbauer polynomials \cite[p.~22]{Mu98},
\begin{equation}\label{eq_Rodrigues}
    C^{(\frac d2 -1)}_k(t)=\frac{(-1)^k R_k^{(\frac d 2 -1)}}{(1-t^2)^\frac{d-3}{2}}\frac{\d^k}{\d t^k}\left( (1-t^2)^{k+\frac{d-3}{2}}\right),
\end{equation}
where $R_k^{(\frac d 2 -1)}>0$ is a positive constant. Inserting this formula into~\eqref{eq:Akd_integral} and integrating by parts, we have that
\begin{equation}\label{eq:Akd_intbyparts}
    A_k^{(d)}=R_k^{(\frac d 2-1)} \int_{-1}^1 \frac{\d^k}{\d t^k}\left((1-t)^\frac12 (1+t)^{\frac{d-3}{2}} \right) (1-t^2)^{k+\frac{d-3}{2}}\, \d t.
\end{equation}
The case of odd dimensions turns out to be simpler, and we handle it first.

\subsection{The odd $d$ case}
If $d=3$ and $k\ge 1$, then
\begin{equation}\label{eq_d_three_base_case}
    \frac{\d^k}{\d t^k}\left( (1-t)^\frac12 \right) = (-1)^k\left(\frac12\right)^{\underline{k}} (1-t)^{\frac12 - k}  <0,
\end{equation}
for every $t\in(-1, 1)$. This immediately implies that $A^{(3)}_k<0$, for all $k\ge 1$.
Now let $d=2n+3$ for $n\ge 1$. The first factor of the integrand in~\eqref{eq:Akd_intbyparts} equals
\begin{equation}\label{eq:Leibniz_rule}
    \frac{\d^k}{\d t^k} \left( (1-t)^\frac12(1+t)^n \right) =\sum_{j=0}^{k-1} \binom{k}{j} \frac{\d^{k-j}}{\d t^{k-j}}\left( (1-t)^\frac12\right) \frac{\d^j}{\d t^j}\left( (1+t)^n\right)+ \frac{\d^k}{\d t^k}\left( (1+t)^n\right).
\end{equation}
In view of \eqref{eq_d_three_base_case}, the only positive summand is the last one, which however vanishes whenever $k>n$. We conclude that the claim holds for every even $k$ satisfying 
$k> n=\frac{d-3}{2}$. In particular, note that $A^{(d)}_{k}\le 0$ for all even $k\ge 2$ provided $d\in \{3, 5\}$. This is also true for $d=7$. Indeed, the previous argument implies $A_k^{(7)}\le 0$ for $k\ge 4$, and for $k=2$ directly computing in~\eqref{eq:Akd_intbyparts} yields
\begin{equation*}
    \begin{split}
        A_2^{(7)} &= R_2^{(\frac 5 2)} \int_{-1}^1 \frac{\d^2}{\d t^2}\left((1-t)^\frac12 (1+t)^{2} \right) (1-t^2)^{4}\, \d t\\
        &=\frac{R_2^{(\frac 5 2)}}{4}\int_{-1}^1(1-t)^{\frac52}(1+t)^4(-1-18t+15t^2)\,\d t \\
        &=-\frac{R_2^{(\frac 5 2)}}{4}\frac{2097152\sqrt{2}}{1322685}<0.
    \end{split}
\end{equation*}
\subsection{The even $d$ case}
Given $n\ge 0$,  let $d=3+(2n+1)$. Applying the binomial theorem (see Remark~\ref{rem:Raabe} below), 
the first factor of the integrand in~\eqref{eq:Akd_intbyparts} equals 
\begin{equation}\label{eq_even_case}
    \frac{\d^k}{\d t^k} \left( (1-t^2)^\frac12(1+t)^n \right)= \sum_{\ell=0}^n \sum_{m=0}^\infty \binom{n}{\ell}\binom{\frac12}{m}(-1)^m\frac{\d^k}{\d t^k}(t^{2m+\ell}).
\end{equation}
Assume that $k>n$ and that $k$ is even. In particular, $\ell<k$, and so all summands  with $m=0$ on the right-hand side of \eqref{eq_even_case} vanish. This is important, because for all $m\ge 1$, 
\begin{equation}\label{eq:crucial_binomial_sign}
    \binom{\frac12}{m}(-1)^m<0.
\end{equation}
Therefore \eqref{eq_even_case} yields 
\begin{equation}\label{eq:even_case_reduced}
    \frac{\d^k}{\d t^k} \left( (1-t^2)^\frac12(1+t)^n \right) = -\sum_{\ell=0}^n\sum_{m=1}^\infty C_{k,\ell,m,n}\mathbf 1_{\{k\le 2m+\ell\}}t^{2m+\ell-k},
\end{equation}
for some constants $C_{k,\ell,m,n}> 0$. Since $k$ is even, we then have that 
\begin{equation}\label{eq:end_even_case}
    A_k^{(d)}= -R_k^{(\frac{d}{2}-1)}\sum_{\ell=0}^n\sum_{m=1}^\infty C_{k,\ell,m,n} \mathbf 1_{\{k\le 2m+\ell\}}\int_{-1}^1 t^{2m+\ell-k}(1-t^2)^{k+\frac{d-3}{2}}\, \d t <0,
\end{equation}
because all summands corresponding to an odd $\ell$ vanish by symmetry. We conclude that the claim holds for every even  $k$ satisfying $k> n=\frac{d-4}{2}$. In particular, $A^{(d)}_{k}\le 0$ for all even $k\ge 2$ provided $d\in \{4, 6\}$.
\end{proof}

\begin{remark}\label{rem:Raabe} In the proof of Lemma \ref{lem_FoschiEvNonPos}, we used the binomial expansion 
\begin{equation}\label{eq:binomial_expansion}
    (1-t^2)^\frac12=\sum_{m=0}^\infty \binom{\frac12}{m}(-1)^mt^{2m}.
\end{equation}
This series converges uniformly in  $[-1, 1]$. Indeed, it suffices to check that $\sum_{m\geq 0} |a_m| <\infty$, where $a_m:=\left\lvert \binom{1/2}{m}\right\rvert$.
In turn, this holds by the Raabe criterion:
\begin{equation}\label{eq:check_raabe}
    \lim_{m\to \infty} m\left( \frac{a_m}{a_{m+1}}-1\right)=\lim_{m\to \infty} m\frac{3/2}{m-1/2}=\frac32 >1, 
\end{equation}
and so $\sum_{m\geq 0} a_m <\infty$.
\end{remark}

We now prove that the eigenvalues of the kernels $K,L$ from \eqref{eq_kernels} have the claimed signs.
\begin{proof}[Proof of Lemma \ref{lem_eigenvalues}]
The case of $L$ is easier, so we focus on $K$.
Let $t=\omega_1\cdot \omega_2$.
    It will be enough to show that \begin{equation}\label{eq_magic_kernel_lemma}
        K_{2m}(t):=\int_{(\mathbb S^{d-1})^2} \left\lvert \sum_{j=1}^4 \omega_j\right\rvert^{2m}M(\boldsymbol{\omega})\, \d\sigma(\boldsymbol{\omega}_{34})=C_{d, m}t^{m+1}+P_{m}(t), 
    \end{equation}
    for some positive constant $C_{d,m}>0$ and polynomial $P_{m}$ of degree at most $m$. Indeed, recalling  \eqref{eq_lambda_k_definition}, there exists $C=C(d,k)>0$, such that 
    \begin{equation}\label{eq:recall_lambda_k_definition}
        \lambda_{2m}(k)=C\int_{-1}^1K_{2m}(t)C_k^{(\frac d 2-1)}(t)(1-t^2)^{\frac{d-3}{2}}\, \d t, 
    \end{equation} so~\eqref{eq_Rodrigues} and~\eqref{eq_magic_kernel_lemma}   together imply that $\lambda_{2m}(k)=0$ for $k>m+1$ and $\lambda_{2m}(m+1)>0$.
    
    Recall the definition~\eqref{eq_bigM}: $M(\boldsymbol{\omega}):=\frac14\left( \lvert \omega_1+\omega_2\rvert^2+\lvert \omega_3+\omega_4\rvert^2 -(\omega_1+\omega_2)\cdot(\omega_3+\omega_4)\right)$. We apply the trinomial expansion 
    \begin{equation}\label{eq:trinomial_exp} 
        (\alpha+\beta+\gamma)^m=\sum_{i+j+k=m} \binom{m}{i\,j\,k}\alpha^i\beta^j\gamma^k ,
    \end{equation}
    where the sum is taken over all nonnegative integers $i,j,k\geq 0$ satisfying $i+j+k=m$, and $\binom{m}{i\,j
    \,k}:=\frac{m!}{i!j!k!}$. Letting $\alpha=\lvert \omega_1+\omega_2\rvert^2, \beta=\lvert \omega_3+\omega_4\rvert^2, \gamma=2(\omega_1+\omega_2)\cdot(\omega_3+\omega_4)$, 
    \begin{align}
    K_{2m}(t)=&\sum_{i+j+k=m}\binom{m}{i\,j
    \,k}2^{k-2}|\omega_1+\omega_2|^{2i+2} \int_{(\mathbb S^{d-1})^2} |\omega_3+\omega_4|^{2j}((\omega_1+\omega_2)\cdot(\omega_3+\omega_4))^k\,\d\sigma(\boldsymbol{\omega}_{34})\notag\\
    &+\sum_{i+j+k=m}\binom{m}{i\,j
    \,k}2^{k-2}|\omega_1+\omega_2|^{2i} \int_{(\mathbb S^{d-1})^2} |\omega_3+\omega_4|^{2j+2}((\omega_1+\omega_2)\cdot(\omega_3+\omega_4))^k\,\d\sigma(\boldsymbol{\omega}_{34})\label{eq_interestingone}\\
    &-\sum_{i+j+k=m}\binom{m}{i\,j
    \,k}2^{k-2}|\omega_1+\omega_2|^{2i} \int_{(\mathbb S^{d-1})^2} |\omega_3+\omega_4|^{2j}((\omega_1+\omega_2)\cdot(\omega_3+\omega_4))^{k+1}\,\d\sigma(\boldsymbol{\omega}_{34}).  \notag
    \end{align}
     It follows that the highest degree term in $t=\frac12\lvert\omega_1+\omega_2\rvert^2 -1$ is obtained by setting $(i,j,k)=(m,0,0)$ on the first sum in \eqref{eq_interestingone}, yielding a contribution of 
    \[\frac14|\mathbb S^{d-1}|^2|\omega_1+\omega_2|^{2m+2}=\frac14|\mathbb S^{d-1}|^2(2+2t)^{m+1}=2^{m-1}|\mathbb S^{d-1}|^2 t^{m+1}+\textup{l.o.t.}\]
    Indeed, the remaining contributions  amount to a polynomial in $t$ of degree at most $m$ since, for integers $J,K\geq 0,$ the integrals in \eqref{eq_interestingone} are all of the form
    \begin{align*}
        &\int_{(\mathbb S^{d-1})^2} |\omega_3+\omega_4|^{2J}((\omega_1+\omega_2)\cdot(\omega_3+\omega_4))^{K}\,\d\sigma(\boldsymbol{\omega}_{34})\\
        &=\int_{\R^d}\int_{(\mathbb S^{d-1})^2}\boldsymbol{\delta}(x-\omega_3-\omega_4)|x|^{2J}((\omega_1+\omega_2)\cdot x)^K \d\sigma(\boldsymbol{\omega}_{34})\d x\\
        &=    \int_{|x|\leq 2} (\sigma\ast\sigma)(x) |x|^{2J} ((\omega_1+\omega_2)\cdot x)^K\,\d x =
            \begin{cases} C|\omega_1+\omega_2|^K, & K\text{ even}, \\
            0, &K\text{ odd},
            \end{cases}
    \end{align*}
    for some $C=C(d,J,K)>0$. To prove the last identity, we argue as follows. Since $\sigma\ast \sigma$ is a radial function,\footnote{More precisely: $(\sigma\ast\sigma)(x)=c_d|x|^{-1}(4-|x|^2)_+^{(d-3)/2}$, for a certain $c_d>0$; see \cite[Lemma 5]{COS15}. We implicitly used this to compute~\eqref{eq_explicit_K_one}; see \cite[Eq.\@ (4.2)]{CNOS21}} the last integral is of the form 
    \begin{equation}\label{eq:radial_integral}
        \Phi(\eta)=\int_{\lvert x \rvert\le 2} F(\lvert x \rvert)(\eta\cdot x)^K\, \d x,
    \end{equation}
    with $\eta=\omega_1+\omega_2$. If $K$ is odd, then this clearly vanishes. Otherwise, $\Phi$ must be radial and homogeneous of degree $K$, hence $\Phi(\eta)=C\lvert\eta\rvert^K$.  
    This completes the proof of \eqref{eq_magic_kernel_lemma} and therefore of Lemma \ref{lem_eigenvalues}.
\end{proof}

\section*{Acknowledgements}
This work was supported by the Research Institute for Mathematical Sciences, an
International Joint Usage/Research Center located in Kyoto University. 
GN and DOS are partially supported by FCT/Portugal through CAMGSD, IST-ID, projects UIDB/04459/2020 and UIDP/04459/2020. GN is also supported by FCT under the Scientific Employment Stimulus, Individual Call 2023.06691.CEECIND. DOS acknowledges further support from the IST Santander Start Up Funds, the Deutsche Forschungsgemeinschaft 
 (DFG, German Research Foundation) under Germany's Excellence Strategy – EXC-2047/1 – 390685813, 
and expresses his gratitude to Neal Bez and Yutaka Terasawa for organizing the RIMS Symposium on {\it Harmonic Analysis and Nonlinear Partial Differential Equations} (June 2024). The authors are grateful to the anonymous referee for valuable suggestions.

\end{document}